\documentclass[12pt,a4paper]{article}
\usepackage[centertags]{amsmath}
\usepackage{amsfonts}
\usepackage{amssymb}
\usepackage{amsthm}
\usepackage{color}
\usepackage{marginnote}
\usepackage{float}

\usepackage{bussproofs}

\usepackage{fullpage}
\usepackage{tikz}
\usepackage{verbatim}
\usetikzlibrary{arrows}

\newtheorem{cor}{Corollary}
\newtheorem{defi}{Definition}
\newtheorem{ex}{Example}
\newtheorem{rem}{Remark}
\newtheorem{prop}{Proposition}
\newtheorem{lem}{Lemma}

\newtheorem{teo}{Theorem}
\newtheorem{exer}{Exercise}
\newtheorem{no}{Notation}

\pagestyle{plain}

\usepackage{color}

\begin{document}

\title{On a characterization of complete $o$-modular join semilattices}

\author{Rodolfo C. Ertola-Biraben}

% \date{}

\maketitle

\abstract Dedekind stated and proved the well-known fact that a lattice is modular if and only if it does not contain a pentagon as a sublattice. 
In this paper we consider a similar result in the literature for the case of certain class of modular join semilattices. 
We both simplify the original proof of the mentioned result and note that if the notion of characterization is undestood strictly as a biconditional, then the proof only holds for the complete algebras in the class.

\section{Introduction}

Since Dedekind (see \cite[Satz IX, p. 389]{D1900}), 
it is well known that a lattice is modular if and only if it does not contain a pentagon as a sublattice. 
In the sequel we will consider a similar result already appearing in \cite{R1990} for the case of $o$-modular join semilattices. 
In the rest of this short introduction we will provide both the basic notions and notations needed for the understanding of the next section.   

We begin introducing the following notations for the sets of lower and upper bounds of a subset of the universe of a given poset. 

\begin{no}
Let $(P,\leq)$ be a poset with $X, Y \subseteq P$. Then, 

(i) $L_XY = \{ x \in X: x \leq y$, for all $\in Y \}$ and

(ii) $U_XY = \{ x \in X: x \geq y$, for all $\in Y \}$. 
\end{no}

\noindent With respect to this definition, in the sequel $X$ is usually taken to be $P$ itself. 
Also, we will almost always take two element sets as the sets $Y$ in the given notations and will accordingly simply write $L_X(a,b)$ and $U_X(a,b)$.   
The only exception appears in the next definition, whose concept seems to have appeared for the first time in \cite{LR1988}. 

\begin{defi} \label{LRd}
A join semilattice $(S; \vee)$ is called \emph{$o$-modular} iff 

for all $a, b, c \in S$, if $c \leq a$, then $L_S(a, b \vee c) \subseteq L_SU_S(L_S(a,b),c)$. 
\end{defi}

\noindent One of the authors of the original paper \cite{LR1988} uses `$o$-modular' in \cite{R1992}. 

The following concepts appear also in \cite{R1990}. 
However, they are stated  for the general case of a poset. 
As in this paper we are dealing with join semilattices, the following simplification will be enough for our purpose. 
Also, note that we prefer to use `semi-strong' instead of `LU-subset' as the author does in \cite[p. 242]{R1990}. 

\begin{defi}
Let ${\bf{T}}$ with universe $T$ be a sub join semilattice of a join semilattice ${\bf{S}}$ with universe $S$. Then,  

(i) ${\bf{T}}$ is called \emph{semi-strong} if, for all $a, b \in T$, if $L_T (a, b) = \emptyset$, then $L_S(a,b) = \emptyset$.

(ii) ${\bf{T}}$ is called \emph{strong} if for all $a, b \in T$ it holds that $L_S(a,b) \subseteq L_T(a,b)$. 
\end{defi}

\noindent It should be clear that strongness implies semi-strongness. 

We will also use the join semilattices represented in Figure \ref{M2 and M4}. 
Note that $L(b,c)= \emptyset$ holds for the join semilattice on the left. 

\begin{figure} [ht]
\begin{center}

\begin{tikzpicture}

    \tikzstyle{every node}=[draw, circle, fill=white, minimum size=4pt, inner sep=0pt, label distance=1mm]

    % First draw some nodes

    \draw (1,.5)	node (b)	[label=right: $b$]	{};
    \draw (0,2)		node (ab)						{};
    \draw (-1,1)	node (ax)	[label=left: $c$]	{};
	\draw (-1,0)	node (a)						{};
      
	% Now connect them      
        
	\draw (a)--(ax)--(ab)--(b);  
 
	% Draw some more nodes

    \draw (5,.5)	node (b4)	[label=right: $b$]	{};
    \draw (4,2)		node (ab4)						{};
    \draw (3,1)		node (c4)	[label=left: $c$]	{};
	\draw (3,0)		node (a)	[label=left: $a$]	{};
    \draw (4,-1)	node (v4)						{};
      
	% and connect them      
        
	\draw (v4)--(a)--(c4)--(ab4)--(b4)--(v4);  
 
\end{tikzpicture}

\end{center}
\caption{\label{M2 and M4} Join semilattices $M_2$ and $M_4$}
\end{figure}
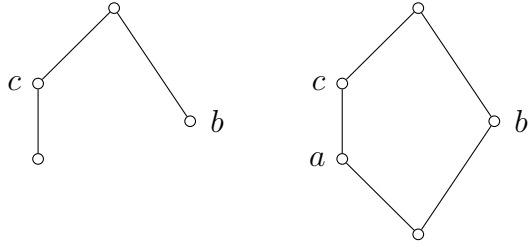

\section{A characterization} 

In spite of being straightforward, we provide a proof of the following fact. 

\begin{teo} \label{T1}
If a join semilattice contains either a semi-strong sub join semilattice isomorphic to $M_2$ or a strong sub join semilattice isomorphic to $M_4$, then it is not $o$-modular.  
\end{teo}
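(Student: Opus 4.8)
The strategy is, in each of the two cases, to instantiate the universally quantified variables of Definition~\ref{LRd} by suitable vertices of the relevant copy of $M_2$ or $M_4$ and to exhibit a single element of $L_S(a,b\vee c)$ lying outside $L_SU_S(L_S(a,b),c)$. The common mechanism is the elementary identity $L_SU_S(Y\cup\{x\})=\{\,w\in S:w\leq x\,\}$, valid for every $x\in S$ and every $Y\subseteq S$ all of whose elements lie below $x$: indeed $x\in U_S(Y\cup\{x\})$ forces any $w$ on the left-hand side to satisfy $w\leq x$, while conversely, if $w\leq x$, then $w\leq x\leq u$ for every $u\in U_S(Y\cup\{x\})$. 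Note also that a sub join semilattice inherits its join, and hence its order, from the ambient semilattice, so every join and order relation read off from Figure~\ref{M2 and M4} holds in $S$.

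For the $M_2$ case, let $T$ be a semi-strong sub join semilattice of $S$ isomorphic to $M_2$, and write its elements, matching the left diagram of Figure~\ref{M2 and M4}, as the bottom $z$, then $c$, the top $t$, and $b$; thus $z<c<t$, $b<t$, and $z\vee b=c\vee b=t$. Instantiate the variables $a,b,c$ of Definition~\ref{LRd} by $c,b,z$ respectively, so that the hypothesis $z\leq c$ is met. Since $b\vee z=t$ and $c\leq t$, we have $c\in L_S(c,b\vee z)$. On the other hand $L_T(c,b)=\emptyset$, so semi-strongness gives $L_S(c,b)=\emptyset$, and the identity above with $Y=\emptyset$ and $x=z$ yields $L_SU_S(L_S(c,b),z)=\{\,w\in S:w\leq z\,\}$, which does not contain $c$ because $z<c$. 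Hence the inclusion demanded by $o$-modularity fails, and $S$ is not $o$-modular.

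For the $M_4$ case, let $T$ be a strong sub join semilattice of $S$ isomorphic to $M_4$, and write its elements, matching the right diagram, as the bottom $v$, then $a$, $c$, the top $t$, and $b$; thus $v<a<c<t$, $v<b<t$, and $a\vee b=c\vee b=t$. Instantiate the variables $a,b,c$ of Definition~\ref{LRd} by $c,b,a$ respectively, so that the hypothesis $a\leq c$ is met. Again $c\in L_S(c,b\vee a)$, since $b\vee a=t$ and $c\leq t$. Now $L_T(c,b)=\{v\}$, so strongness, that is $L_S(c,b)\subseteq L_T(c,b)$, together with the trivial $v\in L_S(c,b)$, gives $L_S(c,b)=\{v\}$; as $v\leq a$, the identity above with $Y=\{v\}$ and $x=a$ gives $L_SU_S(L_S(c,b),a)=\{\,w\in S:w\leq a\,\}$, which does not contain $c$ because $a<c$. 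Thus the inclusion fails and $S$ is not $o$-modular.

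No step is a genuine obstacle; the one point requiring care is the bookkeeping of which figure vertex plays each variable of Definition~\ref{LRd}, together with the observation that the two hypotheses are calibrated precisely to compute $L_S(c,b)$ in $S$. For $M_2$, semi-strongness suffices only because $L_T(c,b)$ is already empty; for $M_4$, strongness is genuinely needed, since otherwise $L_S(c,b)$ might contain elements other than $v$, enlarging $L_SU_S(L_S(c,b),a)$ beyond $\{\,w:w\leq a\,\}$ and possibly readmitting $c$.
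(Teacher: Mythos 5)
Your proof is correct and follows essentially the same route as the paper's: in each case you instantiate Definition~\ref{LRd} with the same triple (the paper's $(c,b,a)$, your $(c,b,z)$ resp.\ $(c,b,a)$), use semi-strongness to get $L_S(c,b)=\emptyset$ for $M_2$ and strongness to pin $L_S(c,b)$ down to $\{v\}$ for $M_4$, and exhibit $c$ as the element violating the inclusion. The only cosmetic difference is that you compute $L_SU_S(L_S(c,b),x)=\{w:w\leq x\}$ via a small general identity, whereas the paper exhibits the witnessing upper bound $y$ directly; these are the same verification.
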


\begin{proof}
On the one hand, let $(S; \vee)$ be a join semilattice containing a semi-strong sub join semilattice $(T; \vee)$ isomorphic to $M_2$.
Then, labelling the elements as in the left structure in Figure \ref{M2andM4nm}, 
there exist $a, b, c, x$, and $y$ in $T$ (so also in $S$) such that $a < c$, $x \leq c$, $x \leq a \vee b$, $a \leq y$, and $x \nleq y$. 
Moreover, as $(T; \vee)$ is semi-strong and $L_T (b, c) = \emptyset$, it follows that $L_S (b, c) = \emptyset$.  
So, $(S; \vee)$ is not modular.

On the other hand, suppose $(S; \vee)$ is a join semilattice that contains a strong sub join semilattice $(T; \vee)$ isomorphic to $M_4$. 
Then, labelling the elements as in the right structure in Figure \ref{M2andM4nm}, there exist $a, b, c, x$, and $y$ in $T$ (so also in $S$) such that $a < c$, $x \leq c$, $x \leq a \vee b$, $a \leq y$, and $x \nleq y$. 
Moreover, take $z \in S$ such that $z \in L_S(b,c)$. 
As $(T; \vee)$ is strong, it follows that $z \in L_T(b,c)$. 
Now, it is clear in $T$ that $z \leq y$. 
So, $(S; \vee)$ is not modular. 
\end{proof}

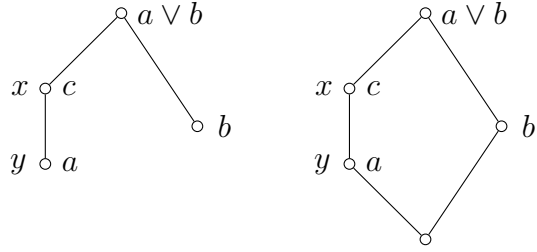
\begin{figure} [ht]
\begin{center}

\begin{tikzpicture}

    \tikzstyle{every node}=[draw, circle, fill=white, minimum size=4pt, inner sep=0pt, label distance=1mm]

    % First draw some nodes

    \draw (1,.5)	node (b)	[label=right: $b$]						{};
    \draw (0,2)		node (ab)	[label=right: $a \vee b$]				{};
    \draw (-1,1)	node (ax)	[label=left: $x$]	[label=right: $c$]	{};
	\draw (-1,0)	node (a)	[label=left: $y$]	[label=right: $a$]	{};
      
	% Now connect them      
        
	\draw (a)--(ax)--(ab)--(b);  
 
	% Draw some more nodes

    \draw (5,.5)	node (b4)	[label=right: $b$]	{};
    \draw (4,2)		node (ab4)	[label=right: $a \vee b$]	{};
    \draw (3,1)		node (c4)	[label=left: $x$]	[label=right: $c$] {};
	\draw (3,0)		node (av4)	[label=left: $y$]	[label=right: $a$]	{};
    \draw (4,-1)	node (v)		{};
      
	% and connect them      
        
	\draw (v)--(av4)--(c4)--(ab4)--(b4)--(v);  
 
\end{tikzpicture}

\end{center}
\caption{\label{M2andM4nm} Sub join semilattices appearing in the proof of Theorem \ref{T1}}
\end{figure}

The proof of the following theorem is simpler than the respective part of the proof of \cite[Theorem 2] {R1990}. 
This is because it is enough to consider two instead of four cases. 

\begin{teo} \label{T2}
If a join semilattice is not $o$-modular, then it contains a semi-strong sub join semilattice isomorphic to either $M_2$ or $M_4$. 
\end{teo}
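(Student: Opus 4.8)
The plan is to unpack the negation of $o$-modularity and build the required configuration by hand. Since $\mathbf{S}$ is not $o$-modular, there exist $a, b, c \in S$ with $c \leq a$ and an element $x \in L_S(a, b \vee c)$ such that $x \notin L_S U_S(L_S(a,b), c)$. The latter means there is some $y \in U_S(L_S(a,b), c)$ with $x \nleq y$; in particular $c \leq y$ and every element of $L_S(a,b)$ is below $y$. Relabelling if convenient, I would set $a' = c$ (the smaller element), keep $b$, and take $c' = a$ (the larger), so that after renaming we have elements $a < c$, together with $x \leq c$, $x \leq a \vee b$, $a \leq y$, $c \leq y$, and $x \nleq y$. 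These are exactly the inequalities decorating Figure \ref{M2andM4nm}.

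Next I would assemble the candidate subuniverse $T = \{a, b, c, a \vee b, y, \dots\}$ and check which of the two pictures it realizes. The key observations: from $x \leq a \vee b$ and $x \nleq y$ we get $a \vee b \nleq y$, so $a \vee b \neq y$ and $a \vee b$ lies strictly above $b$ and above $c$ but is incomparable to $y$; from $a \leq y$ and $a < c \leq y$ the element $y$ sits above both $a$ and $c$. The crucial case split — and this is the simplification the author advertises, two cases rather than four — is on whether $L_S(b, c) = \emptyset$ or not. If $L_S(b,c) = \emptyset$, then $b$ and $c$ have no common lower bound in $S$, hence none in $T$, so $T = \{a, b, c, a \vee b\}$ (with $a$ the bottom of the chain $a < c < a \vee b$ and $b$ off to the side) forms a sub join semilattice isomorphic to $M_2$, and it is automatically semi-strong because the only pair of elements of $T$ with empty meet in $T$ is $(b,c)$, whose meet is empty in $S$ as well. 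If instead $L_S(b,c) \neq \emptyset$, pick $z \in L_S(b,c)$; then $z \leq b \leq a \vee b$ and $z \leq c$, and I must check $z \nleq a$ and place $z$ below $a$ by passing to $z' = z$ or, more carefully, by noting $z$ and $a$ are incomparable (if $z \leq a$ then $z \in L_S(a,b)$, forcing $z \leq y$, which is fine, but the issue is building $M_4$) — so the honest move is to replace $y$ by the actual witness and take as the bottom element an appropriate common lower bound, yielding $T = \{z, a, b, c, a \vee b\}$ isomorphic to $M_4$.

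Then I would verify the semi-strong (indeed strong, in the $M_4$ case) condition. For $M_2$ the only nontrivial pair is $(b,c)$ and semi-strongness is immediate from the case hypothesis $L_S(b,c) = \emptyset$. For $M_4$ one must check that $L_S(a,b) \subseteq L_T(a,b)$, $L_S(b,c) \subseteq L_T(b,c)$, etc.; the point is that $y = a \vee c$ inside $T$ dominates every element of $L_S(a,b)$ by choice of $y$, and the bottom of $M_4$ dominates nothing but itself — here one uses that any $w \in L_S$ of a nonadjacent pair would have to be placed somewhere in the five-element picture and the inequalities force it to be the bottom.

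The main obstacle I anticipate is the bookkeeping in the $L_S(b,c) \neq \emptyset$ case: one must choose the five elements of the copy of $M_4$ so that all the incomparabilities in $M_4$ genuinely hold in $S$ (not merely the comparabilities), and simultaneously arrange semi-strongness. Concretely, from an arbitrary $z \in L_S(b,c)$ it is not clear that $\{z, a, b, c, a\vee b\}$ is closed under $\vee$ or that $z \nleq a$; the fix is presumably to first reduce to a minimal such witness or to observe that $z \vee a$, $z \vee$ (others) either collapse into the existing four elements or produce the needed bottom, and to use the failure $x \nleq y$ to certify the key incomparability $a \vee b \nleq y$ that prevents the configuration from degenerating. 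Getting this reduction clean — so that exactly the five elements with exactly the $M_4$ order relations appear — is where the real work lies; everything else is reading inequalities off Figure \ref{M2andM4nm}.
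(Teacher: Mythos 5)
Your overall strategy --- unpack the failure of $o$-modularity into the inequalities $a<c$, $x\leq c$, $x\leq a\vee b$, $a\leq y$, $x\nleq y$, plus the condition that every common lower bound of $b$ and $c$ lies below $y$, and then split on whether $L_S(b,c)=\emptyset$ --- is exactly the paper's. But the concrete subalgebras you propose do not work, and the step you defer is where the entire proof lives. A first slip: after renaming you list $c\leq y$ among the hypotheses; $y$ is an upper bound of $L_S(b,c)\cup\{a\}$ (in the new labels), so you get $a\leq y$ but not $c\leq y$ --- indeed $c\leq y$ together with $x\leq c$ would contradict $x\nleq y$. More seriously, your $M_2$ candidate $\{a,b,c,a\vee b\}$ need not be a sub join semilattice: nothing forces $c\leq a\vee b$, so $c\vee b$ can fall outside the set. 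The paper's construction replaces $c$ by $a\vee x$, which does satisfy $a<a\vee x\leq c$ (from $x\leq c$ and $a<c$) and $a\vee x<a\vee b$, making $T_2=\{a,b,a\vee x,a\vee b\}$ closed under $\vee$; semi-strongness then follows because $a$ and $a\vee x$ both lie below $c$, so any common lower bound in $S$ of $b$ with either of them would land in $L_S(b,c)=\emptyset$.

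In the case $L_S(b,c)\neq\emptyset$ you concede that $\{z,a,b,c,a\vee b\}$ may fail to be closed under $\vee$ and that $z\leq a$ is possible, and you gesture at a ``minimal witness'' or at joins ``collapsing into the existing four elements''; neither happens in general (there need be no minimal element of $L_S(b,c)$, and $z\vee a$, $z\vee c$ are genuinely new elements). The missing idea is to saturate under the joins you need: take any $v\in L_S(b,c)$ and set $T_4=\{v,\ a\vee v,\ x\vee a\vee v,\ b,\ a\vee b\}$, then verify the strict chain $v<a\vee v<x\vee a\vee v<a\vee b$ and the incomparabilities with $b$ from the listed inequalities (in particular $v\leq y$, coming from the upper-bound condition on $y$, is what separates $a\vee v$ from $x\vee a\vee v$). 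This set is closed under $\vee$ by construction and is trivially semi-strong, since $v$ is a common lower bound of every pair. Finally, your parenthetical claim that the $M_4$ copy is ``indeed strong'' cannot be established and is precisely the point the paper is making: there may exist $n\in S$ with $n\leq b$, $n\leq x\vee a\vee v$, and $n\nleq v$, so one only gets semi-strongness here, and upgrading to strongness is exactly what requires completeness in the subsequent corollary.
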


\begin{proof}
Suppose that ${\bf S} = (S; \vee)$ is a non-$o$-modular join semilattice. 
Then, by Definition \ref{LRd} it follows that there exist $a, b, c, x, y \in S$ such that 
(i) $a < c$, 
(ii) $x \leq c$, 
(iii) $x \leq a \vee b$, 
(iv) for all $z$, if $z \leq b$ and $z \leq c$, then $z \leq y$, 
(v) $a \leq y$, and
(vi) $x \nleq y$.
Now, by (v) and (vi), it follows 

(vii) $x \nleq a$.  
Also, by (ii), (iii), (iv) and (vi), it follows that $a \nleq b$ and, 
using (iii) and (vii), it follows that $b \nleq a$. 
So, we have 

(viii) $a \parallel b$. 
By (iii), (iv), (v) and (vi), we also get  

(ix) $b \nleq c$. 

\noindent Now, by (vii), (viii), and (ix), we get  
$a < a \vee x < a \vee b$, $b < a \vee b$, $a \parallel b$, and $a \vee x \parallel b$. 
So, the set $T_2 = \{ a, b, a \vee x, a \vee b \}$ (see Figure \ref{T2andT4}) is a sub join semilattice of ${\bf S}$ isomorphic to $M_2$. 
If we further assume that $L_S(b,c) = \emptyset$, then $T_2$ is semi-strong. 

Let us now suppose that $L_S(b,c) \neq \emptyset$, that is, 
that there exists a $v \in S$ such that 

(x) $v \leq b$ and 

(xi) $v \leq c$. Then, using (ix) and (xi), it follows 

(xii) $v < b$ and, by (iv), (x), and (xi), it follows 

(xiii) $v \leq y$. Also, by (viii) and (x), we have 

(xiv) $v < a \vee v$ and, by (v), (vi), and (xiii), we get 

(xv) $a \vee v < x \vee a \vee v$. 
Using (i), (ii), (iii), (ix)-(xi), we also have 

(xvi) $x \vee a \vee v < a \vee b$. Now, by (i), (viii), (ix), and (xi) we have 

(xvii) $a \vee v \parallel b$. Finally, by (i), (ii), (viii), (ix), and (xi), we get 

(xviii) $x \vee a \vee v \parallel b$. 

\noindent So, by (xiv)-(xviii) and (xii), we get  
$v < a \vee v < x \vee a \vee v < a \vee b$, $v < b < a \vee b$, $a \vee v \parallel b$, and
$x \vee a \vee v \parallel b$. 
That is, the set $T_4 = \{ v, a \vee v, x \vee a \vee v, b, a \vee b \}$ (see Figure \ref{T2andT4}) is a sub join semilattice of ${\bf S}$ isomorphic to $M_4$. 
Finally, as $L_{T_4}(a, b) \neq \emptyset$, for all $a, b \in T_4$, it trivially follows that $T_4$ is also semi-strong. 
\end{proof}

\begin{figure} [ht]
\begin{center}

\begin{tikzpicture}

    \tikzstyle{every node}=[draw, circle, fill=white, minimum size=4pt, inner sep=0pt, label distance=1mm]

    % First draw some nodes

    \draw (1,.5)	node (b)	[label=below: $b$]		{};
    \draw (0,2)		node (ab)	[label=left: $a \vee b$]{};
    \draw (-1,1)	node (ax)	[label=left: $a \vee x$]{};
	\draw (-1,0)	node (a)	[label=left: $a$]		{};
      
	% Now connect them      
        
	\draw (a)--(ax)--(ab)--(b);  
 
	% Draw some more nodes

    \draw (6,.5)	node (b4)	[label=right: $b$]				{};
    \draw (5,2)		node (ab4)	[label=right: $a \vee b$]		{};
    \draw (4,1)		node (xav)	[label=left: $x \vee a \vee v$]	{};
	\draw (4,0)		node (av)	[label=left: $a \vee v$]		{};
    \draw (5,-1)	node (v)	[label=right: $v$]				{};
      
	% and connect them      
        
	\draw (v)--(av)--(xav)--(ab4)--(b4)--(v);  
 
\end{tikzpicture}

\end{center}
\caption{\label{T2andT4} Sub join semilattices $T_2$ and $T_4$}
\end{figure}
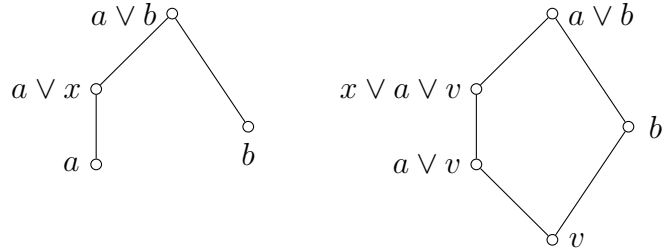

That said, the observant reader may have noticed that Theorem \ref{T1} together with Theorem \ref{T2} do not constitute a characterization (meaning a biconditional) of $o$-modular join semilattices \emph{simpliciter}, due to the fact that Theorem \ref{T2} is not the reverse implication of Theorem \ref{T1}. 
However, the end of the proof of Theorem 2 suggests that we can get a characterization in the case of \emph{complete} (so also for \emph{finite}) $o$-modular join semilattices. 
Indeed, we have the following results. 

\begin{cor}
A \emph{complete} join semilattice is not $o$-modular if and only if it contains either a semi-strong sub semilattice isomorphic to $M_2$ or a strong sub semilattice isomorphic to $M_4$.   
\end{cor}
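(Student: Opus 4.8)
The plan is to read the two implications off the theorems already proved, using completeness exactly once: to turn the \emph{semi-strong} copy of $M_4$ produced by Theorem~\ref{T2} into a \emph{strong} one. The direction ``$\Leftarrow$'' needs nothing new and no completeness, since it is literally the content of Theorem~\ref{T1}. For ``$\Rightarrow$'', let ${\bf S}=(S;\vee)$ be complete and not $o$-modular, and run the proof of Theorem~\ref{T2}. That proof produces elements $a,b,c,x,y\in S$ satisfying (i)--(ix) and then splits according to whether $L_S(b,c)=\emptyset$. If it is empty, the set $T_2$ built there is already a semi-strong sub join semilattice isomorphic to $M_2$ and we are done, so from now on assume $L_S(b,c)\neq\emptyset$.

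This is where completeness enters. The set $L_S(b,c)$ has a supremum $v:=\bigvee L_S(b,c)$ in ${\bf S}$, and since both $b$ and $c$ are upper bounds of $L_S(b,c)$ we get $v\leq b$ and $v\leq c$; hence $v$ is in fact the greatest element of $L_S(b,c)$. Now repeat the $M_4$-construction from the proof of Theorem~\ref{T2} with this distinguished $v$. Inspecting that argument, the derivation of (x)--(xviii) uses nothing about $v$ beyond $v\leq b$ and $v\leq c$ together with (i)--(ix), so it carries over word for word, and $T_4=\{v,\,a\vee v,\,x\vee a\vee v,\,b,\,a\vee b\}$ is again a sub join semilattice of ${\bf S}$ isomorphic to $M_4$.

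It remains to check that this $T_4$ is \emph{strong}, and I expect this to be the real point of the proof; it is precisely where maximality of $v$ is used (and, correspondingly, where the isomorphism sends the incomparable pair that Theorem~\ref{T1} relies on). The instances one has to verify are the incomparable pairs of $T_4$, namely $(b,\,a\vee v)$ and $(b,\,x\vee a\vee v)$. Suppose $z\in S$ lies below $b$ and below one of $a\vee v$, $x\vee a\vee v$. From $a<c$ (condition (i)), $x\leq c$ (condition (ii)) and $v\leq c$ (by the choice of $v$) we get $a\vee v\leq c$ and $x\vee a\vee v\leq c$, so $z\leq c$; hence $z\in L_S(b,c)$, and therefore $z\leq v$ by maximality of $v$. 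Thus on each such pair the $S$-lower bounds coincide with the $T_4$-lower bounds, so $T_4$ is strong, which establishes ``$\Rightarrow$''. It is worth stressing that this is exactly the step at which the non-complete case breaks down: the $v$ that Theorem~\ref{T2} hands us is merely \emph{some} element of $L_S(b,c)$, whereas strongness forces the \emph{largest} such element, whose existence is precisely what completeness guarantees and what a general $o$-modular join semilattice may lack — which is why Theorems~\ref{T1} and~\ref{T2} do not by themselves amount to a biconditional.
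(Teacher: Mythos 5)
Your proof is correct, and it rests on exactly the same idea as the paper's: completeness supplies a \emph{greatest} common lower bound, and maximality of that lower bound is what upgrades semi-strongness to strongness. The only difference is where you invoke completeness. The paper first builds $T_4$ from an arbitrary $v \in L_S(b,c)$, observes it may fail to be strong, and then repairs it by passing to $\bigvee n = \bigvee\{n \in S : n \leq b,\ n \leq x \vee a \vee v\}$, replacing the bottom two vertices to obtain a new pentagon $T_5 = \{\bigvee n,\ a \vee \bigvee n,\ x \vee a \vee v,\ b,\ a \vee b\}$ whose pentagon relations must then be re-verified. You instead take $v := \bigvee L_S(b,c) = b \wedge c$ at the outset, so that the $T_4$ of Theorem \ref{T2} is already strong and all of (x)--(xviii) are inherited verbatim; your observation that $z \leq b$ and $z \leq x \vee a \vee v$ force $z \leq c$, hence $z \leq v$, is the same computation the paper performs for $T_5$. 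Your variant is slightly more economical (one pentagon instead of two, no re-verification of the order relations), and your closing remark correctly identifies why the uncompleted Theorems \ref{T1} and \ref{T2} fall short of a biconditional. One cosmetic caveat, which applies equally to the paper's own argument: what both verifications literally establish is that every $S$-lower bound of the critical pair lies \emph{below} the bottom element of the constructed copy of $M_4$, rather than the set inclusion $L_S(a,b) \subseteq L_T(a,b)$ as the definition of strongness is written; this is the property actually used in the proof of Theorem \ref{T1}, so nothing is lost, but it is worth being aware that you are matching the paper's reading of the definition rather than its letter.
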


\begin{proof}
One direction was already taken care of when proving Theorem \ref{T1}. 
For the other direction, proceed as in the case of the proof of Theorem \ref{T2}. 
Now note that at the end of the proof of Theorem \ref{T2} we only concluded $T_4$ to be \emph{semi-strong}. 
This is because it could exist an element $n \in S$ such that $n \leq b$, $n \leq x \vee a \vee v$, and $n \nleq a \vee v$.   
In this case, if the given join semilattice is complete, 
there exists $\bigvee n = \bigvee \{n \in S: n \leq b$, $n \leq x \vee a \vee v \}$.   
Now let us see that the sub join semilattice $T_5 = \{ \bigvee n, a \vee \bigvee n, x \vee a \vee v, b, a \vee b \}$ is, again, a sub join semilattice of ${\bf S}$ isomorphic to $M_4$ (see Figure \ref{T5}). 
It should be clear that both $\bigvee n \leq b$ and $\bigvee n \leq x \vee a \vee v$, 
the last of which implies, by parts (i), (ii), and (xi) of the proof of Theorem \ref{T2}, that $\bigvee n \leq c$. 
So, by part (iv), $\bigvee n \leq y$. 
Moreover, $\bigvee n < b$, as if  $\bigvee n = b$, then $b \leq c$, in contradiction with (ix). 
Also, $\bigvee n < a \vee \bigvee n$, 
because if $\bigvee n = a \vee \bigvee n$, then $a \leq \bigvee n$ and so $a \leq b$, contradicting (viii).   
Furthermore, $a \vee \bigvee n < x \vee a \vee v$, as if $a \vee \bigvee n = x \vee a \vee v$, then $x \leq a \vee \bigvee n$ and so $x \leq y$, in contradiction with (vi). 
Finally, $b \parallel a \vee \bigvee n$, by (i), (viii), and (ix).  
Taking into account (xvi) and (xviii), 
we have that $\bigvee n < a \vee \bigvee n < x \vee a \vee v < a \vee b$, $\bigvee n < b < a \vee b$, $a \vee \bigvee n \parallel b$, and $x \vee a \vee v \parallel b$.
\noindent Furthermore, note that $T_5$ is strong: 
take a $z \in S$ such that $z \leq b$ and $z \leq x \vee a \vee v$. 
Then $z \in \{n \in S: n \leq b$, $n \leq x \vee a \vee v \}$ and so, $z \leq \bigvee n$.    
\end{proof}

\begin{figure} [ht]
\begin{center}

\begin{tikzpicture}

    \tikzstyle{every node}=[draw, circle, fill=white, minimum size=4pt, inner sep=0pt, label distance=1mm]

    % First draw some nodes

    \draw (2,-1)	node (b)	[label=right: $b$]				{};
    \draw (0,2)		node (ab)	[label=right: $a \vee b$]		{};
    \draw (-2,0)	node (xav)	[label=left: $x \vee a \vee v$]	{};
	\draw (-2,-2)	node (av)	[label=left: $a \vee v$]		{};
    \draw (0,-4)	node (v)	[label=right: $v$]				{};
    
    \draw (-1,-1)	node (an)	[label=right: $a \vee \bigvee n$]	{};
    \draw (0,-2)	node (n)	[label=below: $\bigvee n$]		{};
      
	% Now connect nodes      
        
	\draw (v)--(av)--(xav)--(ab)--(b)--(v);  
	\draw (n)--(an)--(xav);
	\draw (n)--(b);
 
\end{tikzpicture}

\end{center}
\caption{\label{T5} Finding $T_5$ inside $T_4$}
\end{figure}
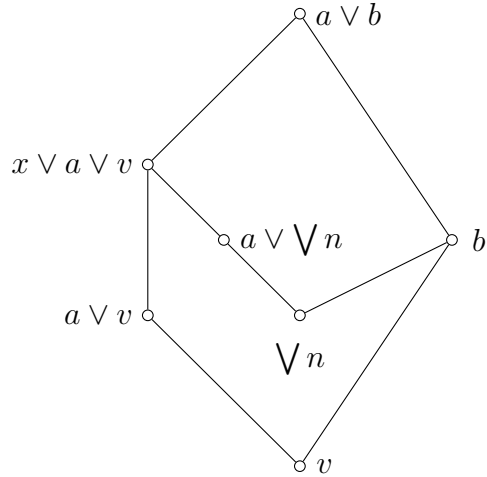

\begin{cor}
A \emph{finite} join semilattice is not $o$-modular if and only if it contains either a semi-strong sub semilattice isomorphic to $M_2$ or a strong sub semilattice isomorphic to $M_4$.   
\end{cor}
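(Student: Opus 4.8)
The plan is to deduce the finite case directly from the previous corollary. First I would observe that every finite join semilattice is complete: any subset $X\subseteq S$ has a join, since $X$ is finite and the binary operation $\vee$ can be iterated finitely many times to produce $\bigvee X$ (with $\bigvee\emptyset$ handled by noting that the relevant joins in the proof of the previous corollary are over nonempty sets, or that a finite join semilattice with a nonempty universe need not have a bottom but the construction only uses the join of the nonempty set $\{n\in S: n\leq b,\ n\leq x\vee a\vee v\}$, which is nonempty precisely when we are in the problematic case). Hence the hypothesis of the previous corollary is automatically satisfied.

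Given that, the proof reduces to a single sentence: since a finite join semilattice is complete, the statement is an immediate instance of the previous corollary. I would write exactly that, perhaps after recalling briefly why finiteness implies completeness, so that the reader is not left to wonder whether there is a subtlety. I would also note that the parenthetical remark already made in the text preceding the corollaries (``so also for \emph{finite}'') flags this implication, so the corollary is stated mainly for emphasis and ease of reference.

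The only step that requires the smallest amount of care is the completeness claim, and even there the subtlety is merely whether one demands a top or bottom element as part of ``complete''. Since the construction in the previous corollary's proof only takes the join of a nonempty family, it suffices that every nonempty finite subset has a join, which is immediate from the join semilattice axioms. I expect no real obstacle; the corollary is genuinely a corollary.

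\begin{proof}
Every finite join semilattice is complete, since the join of any nonempty finite subset is obtained by finitely many applications of $\vee$. In particular, the join $\bigvee\{n \in S : n \leq b,\ n \leq x \vee a \vee v\}$ used in the proof of the previous corollary exists whenever that set is nonempty. Hence the previous corollary applies, and the claim follows.
\end{proof}
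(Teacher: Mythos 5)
Your proposal is correct and matches the route the paper itself intends: the text preceding the corollaries already flags that the complete case covers the finite case (``so also for \emph{finite}''), and the only point needing care is exactly the one you address, namely that the join used in the previous corollary's proof is over the nonempty set $\{n \in S : n \leq b,\ n \leq x \vee a \vee v\}$ (nonempty because $v$ belongs to it), so no bottom element or empty join is ever required. Nothing further is needed.
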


\end{document}